\documentclass[11pt]{amsart}

\usepackage[english]{babel}
\usepackage{txfonts}
\usepackage{amsmath}
\usepackage{amssymb}
\usepackage{mathrsfs}
\usepackage{color}
\usepackage{esint}
\usepackage{enumerate}

\usepackage[colorlinks=true,
  linkcolor=blue,
  citecolor=red,
  urlcolor=magenta,
  backref=false]{hyperref}

\usepackage[T1]{fontenc}
\usepackage{lmodern}    

\usepackage{anysize}

\allowdisplaybreaks
\newtheorem{theorem}{Theorem}[section]
\newtheorem{lemma}[theorem]{Lemma}

\newtheorem{proposition}[theorem]{Proposition}

\theoremstyle{definition}
\newtheorem{definition}[theorem]{Definition}

\newtheorem{remark}[theorem]{Remark}

\newtheorem*{problem*}{Problem}
\numberwithin{equation}{section}

\DeclareMathOperator{\h}{\mathbf{h}} % Metric functional
\newcommand{\abs}[1]{\lvert#1\rvert} %  Absolute value
\newcommand{\norm}[1]{\lVert#1\rVert} %  Norm 
\newcommand{\R}{\mathbb{R}}  % The real numbers.
\numberwithin{equation}{section}

\begin{document}

\title{A Weak Topology on Metric Spaces}
\begin{abstract}
We use metric functionals to build a weak topology associated 
with $d$-weak convergence on metric spaces. We also show the
existence of a sequence in the space $C[0,1]$ that is unbounded 
in the sup-norm yet converges $d$-weakly. 
This points out an error in an earlier work.
\end{abstract}
\author{Armando W. Gutiérrez}
\author{Olavi Nevanlinna}

\date{}

\maketitle

\section{Main results}

In an earlier work \cite{GNweakconv}, we introduced the notion 
of $d$-weak convergence in arbitrary metric spaces $(X,d)$. We 
restricted our analysis there to sequences with the purpose of
testing and validating our ideas in a simple way. We verified that
$d$-weak convergence behaves as expected in \emph{all} normed 
linear spaces; every \emph{bounded sequence} in a normed linear space 
converges $d$-weakly if and only if it converges in the classical 
weak sense.

Our goal here is to build a topology on metric spaces 
that is connected to our notion of weak convergence. Let us 
recall first the formal definition of $d$-weak convergence. 

\begin{definition}
Let $(X,d)$ be a metric space and $X^\diamondsuit$ the space of 
all metric functionals on $X$. We say that a net of points 
$(a_i)_{i\in I}$ in $X$ converges $d$-weakly to a point $a$ in $X$ 
if for every $\h$ in $X^\diamondsuit$ we have 
$$
    \liminf_{i\in I}\h(a_i) \geq \h(a).
$$
\end{definition}

Our first result is the following. 

\begin{theorem}\label{thm:thm1}
Let $(X,d)$ be a metric space and $X^\diamondsuit$ the space of 
all metric functionals on $X$. Then, there exists a topology on $X$ 
induced by $X^\diamondsuit$, denoted by $\sigma(X,X^\diamondsuit)$,
with the following properties:
\begin{enumerate}
    \item The topology $\sigma(X,X^\diamondsuit)$ is coarser 
    than the metric topology on $X$.
    \item The topology $\sigma(X,X^\diamondsuit)$ on $X$ makes 
    every metric functional lower semi-continuous.
    \item A net $(a_i)_{i\in I}$ in $X$ converges in the 
    topology $\sigma(X,X^\diamondsuit)$ if and only if it 
    converges $d$-weakly.
\end{enumerate}
\end{theorem}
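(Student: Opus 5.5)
The plan is to take $\sigma(X,X^\diamondsuit)$ to be the \emph{coarsest topology on $X$ making every metric functional lower semi-continuous}, and then check the three properties directly. Throughout I use the standard description of $X^\diamondsuit$: fix a base point $x_0\in X$, consider the functions $\h_y(x)=d(x,y)-d(y,x_0)$ for $y\in X$, and let $X^\diamondsuit$ be the closure of $\{\h_y : y\in X\}$ in $\R^X$ with the topology of pointwise convergence. Each $\h_y$ is $1$-Lipschitz, and this property is preserved under pointwise limits. Hence every $\h\in X^\diamondsuit$ satisfies
\[
\abs{\h(x)-\h(z)}\le d(x,z)\quad\text{for all } x,z\in X.
\]

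Concretely, $\sigma(X,X^\diamondsuit)$ is the topology generated by the subbasis
\[
\mathcal S=\bigl\{\{x\in X : \h(x)>t\} : \h\in X^\diamondsuit,\ t\in\R\bigr\}.
\]
A base is given by the finite intersections of members of $\mathcal S$. Equivalently, a neighbourhood base at a point $a$ consists of the sets
\[
\{x : \h_k(x)>\h_k(a)-\varepsilon,\ k=1,\dots,n\},
\]
with $\h_1,\dots,\h_n\in X^\diamondsuit$ and $\varepsilon>0$. Property (2) then holds by construction. A function $f$ is lower semi-continuous exactly when every superlevel set $\{f>t\}$ is open, and these superlevel sets are precisely the subbasic sets.

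For (1), each $\h\in X^\diamondsuit$ is $1$-Lipschitz, so it is continuous for the metric topology. Therefore every set $\{\h>t\}$ is metrically open. Since the metric topology is closed under finite intersections and arbitrary unions, every $\sigma(X,X^\diamondsuit)$-open set is metrically open, i.e.\ $\sigma(X,X^\diamondsuit)$ is coarser.

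For (3), I would argue in both directions.

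\emph{Topological convergence implies $d$-weak convergence.} Suppose $a_i\to a$ in $\sigma(X,X^\diamondsuit)$. Fix $\h$ and $\varepsilon>0$. The set $\{\h>\h(a)-\varepsilon\}$ is an open set containing $a$, so eventually $\h(a_i)>\h(a)-\varepsilon$. Hence $\liminf_i \h(a_i)\ge \h(a)-\varepsilon$, and letting $\varepsilon\to0$ gives the $d$-weak inequality.

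\emph{$d$-weak convergence implies topological convergence.} Suppose $\liminf_i\h(a_i)\ge\h(a)$ for every $\h$. Take a basic neighbourhood of $a$, given by finitely many conditions $\h_k(x)>t_k$ with $\h_k(a)>t_k$. For each $k$, the liminf inequality gives an index $i_k$ beyond which $\h_k(a_i)>t_k$. Since $I$ is directed, there is an index dominating $i_1,\dots,i_n$, and beyond it the net lies in the whole intersection. Hence $a_i\to a$ in $\sigma(X,X^\diamondsuit)$.

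The only point needing care, and the main potential obstacle, is the $1$-Lipschitz property of general metric functionals, which is used in (1). This relies on the definition of $X^\diamondsuit$ as the pointwise closure of $\{\h_y : y\in X\}$: the inequality $\abs{\h(x)-\h(z)}\le d(x,z)$ holds for each $\h_y$ and passes to pointwise limits. It is worth remarking that the topology does not depend on the choice of $x_0$. Changing the base point alters each metric functional only by an additive constant, which leaves the family of superlevel sets $\mathcal S$ unchanged.
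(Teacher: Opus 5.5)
Your proposal is correct and essentially follows the paper. Your neighbourhoods $\{x:\h_k(x)>\h_k(a)-\varepsilon\}$ are exactly the paper's basis sets $B(a,F,\varepsilon)$, so your subbasis-generated topology coincides with $\sigma(X,X^\diamondsuit)$, and your arguments for (1)--(3) are the paper's (the $1$-Lipschitz property, the superlevel sets, and the directed-index argument for nets). The only difference is presentation: starting from the subbasis makes (2) automatic and spares you the basis-intersection check that the paper carries out by hand.
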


In \cite[Conjecture 1.5]{GNweakconv} we conjectured
that $d$-weakly convergent sequences in normed linear spaces
must be bounded. The conjecture was based
on a claim in \cite[Theorem 1.4]{GNweakconv}. Such a 
claim contains an error. The following theorem shows that
the conjecture is false.

\begin{theorem}\label{thm:conjfalse}
There exists a sequence $(f_n)$ in $C[0,1]$ that
converges to $0$ in the topology 
$\sigma(C[0,1],C[0,1]^\diamondsuit)$ and 
$\norm{f_n}_\infty \to \infty$.
\end{theorem}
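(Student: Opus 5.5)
The plan is to exhibit an explicit sequence of moving spikes of growing height. I then show that every metric functional on $C[0,1]$ is bounded below by the negative of an evaluation at a single point, so pointwise convergence to $0$ is enough. By Theorem~\ref{thm:thm1}(3), convergence to $0$ in $\sigma(C[0,1],C[0,1]^\diamondsuit)$ is the same as $d$-weak convergence. So it suffices to find $(f_n)$ with $\norm{f_n}_\infty\to\infty$ and $\liminf_n \h(f_n)\ge \h(0)=0$ for every $\h\in C[0,1]^\diamondsuit$.

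\emph{The sequence.} Let $f_n\ge 0$ be the continuous tent function supported in $\bigl[\tfrac1{n+1},\tfrac1n\bigr]$ with peak value $n$. Then $\norm{f_n}_\infty=n\to\infty$. Moreover $f_n(t)\to 0$ for every $t\in[0,1]$: for $t>0$ the point $t$ lies outside the support of $f_n$ once $n$ is large, and $f_n(0)=0$ for all $n$.

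\emph{Key lemma.} For every $\h\in C[0,1]^\diamondsuit$ there is a point $s\in[0,1]$ such that $\h(x)\ge -\abs{x(s)}$ for all $x\in C[0,1]$. Granting this, $\liminf_n \h(f_n)\ge -\lim_n \abs{f_n(s)}=0$, which proves the theorem.

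\emph{Proof of the lemma.} I use base point $0$, so every metric functional is a pointwise limit of a net of internal functionals $\h_{y_\alpha}(x)=\norm{x-y_\alpha}_\infty-\norm{y_\alpha}_\infty$.
\begin{enumerate}
\item For each $\alpha$, pick $t_\alpha\in[0,1]$ with $\abs{y_\alpha(t_\alpha)}=\norm{y_\alpha}_\infty$.
\item Then for every $x$,
\[
\h_{y_\alpha}(x)\ge \abs{x(t_\alpha)-y_\alpha(t_\alpha)}-\abs{y_\alpha(t_\alpha)}\ge -\abs{x(t_\alpha)} .
\]
\item By compactness of $[0,1]$, pass to a subnet with $t_\alpha\to s$. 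Pointwise convergence $\h_{y_\alpha}\to\h$ is preserved along subnets.
\item Fix $x$. Since $x$ is continuous, $x(t_\alpha)\to x(s)$. Taking limits in the inequality of step 2 gives $\h(x)\ge -\abs{x(s)}$.
\end{enumerate}
This argument does not depend on whether $(y_\alpha)$ is bounded, so it covers internal and boundary metric functionals uniformly.

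\emph{Main obstacle.} I expected the hard part to be controlling the full space $C[0,1]^\diamondsuit$ without an explicit classification of its horofunctions. The point-evaluation lower bound avoids this. The only delicate points are using a subnet rather than a subsequence, and noting that the chosen point $s$ depends on $\h$ but not on $x$. I would also remark that this lower bound is essentially sharp: spikes $y_k$ of height $k$ at $t_0$ give $\h(x)=-x(t_0)$. This explains why pointwise convergence of $(f_n)$ is genuinely needed and why the spikes must move.
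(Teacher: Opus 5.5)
Your proof is correct, but at the key step it takes a different route from the paper. Both arguments rest on the same internal estimate: if $\abs{w}$ attains its maximum at $s$, then $\h_w(f)\ge\abs{f(s)-w(s)}-\abs{w(s)}$.

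The paper uses this estimate only qualitatively, in the form ``$f(s)=0$ implies $\h_w(f)\ge 0$''. Since its $f_n$ have pairwise disjoint supports, each internal functional is negative on at most one $f_n$. If some metric functional were negative on two of them, say $\h(f_n),\h(f_m)<-2\epsilon$, then eventually every approximating internal functional would be below $-\epsilon$ on both, which is a contradiction.

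You instead keep the quantitative form $\h_w(x)\ge-\abs{x(t)}$. You transfer it to the closure by extracting a convergent subnet of maximizing points. This gives a structural fact about all of $C[0,1]^\diamondsuit$: each $\h$ has a single point $s$ with $\h(x)\ge-\abs{x(s)}$ for every $x$.

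The paper's argument is more elementary and tailored to the example. It needs no subnet extraction and no continuity at a limit point, only disjointness of supports. Yours costs the compactness step but gives considerably more:
\begin{enumerate}
\item Every sequence or net converging pointwise to $0$, bounded or not, converges to $0$ in $\sigma(C[0,1],C[0,1]^\diamondsuit)$.
\item It recovers the paper's conclusion for disjointly supported $f_n$, because then $f_n(s)=0$ for all but at most one $n$.
\item Combine it with your sharpness remark, its mirror image $x\mapsto x(t_0)$ (obtained from spikes of height $-k$), and translation invariance via Theorem~\ref{thm:thm2}. Then it identifies $\sigma(C[0,1],C[0,1]^\diamondsuit)$ with the topology of pointwise convergence on $C[0,1]$, which the paper's proof does not reveal.
\end{enumerate}
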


\begin{theorem}\label{thm:thm2}
Let $(X,d)$ and $(Y,D)$ be metric spaces. Suppose that
$T:X \to Y$ is a distance-preserving mapping, that is, 
$D(Tp,Tq) = d(p,q)$ for all points $p,q$ in $X$.
Assume that $Y=TX$. Then, the mapping
\[ T: (X, \sigma(X,X^\diamondsuit)) \to (Y, \sigma(Y,Y^\diamondsuit))\]
is a homeomorphism.
\end{theorem}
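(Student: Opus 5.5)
Since $T$ is distance-preserving it is injective, and together with $Y=TX$ it is a bijective isometry with isometric inverse $T^{-1}:Y\to X$. By symmetry it suffices to show that any surjective isometry $T$ is continuous from $\sigma(X,X^\diamondsuit)$ to $\sigma(Y,Y^\diamondsuit)$; applying the same statement to $T^{-1}$ then gives continuity of the inverse. By property (3) of Theorem~\ref{thm:thm1}, continuity can be checked on nets: we must show that if $(a_i)$ converges $d$-weakly to $a$ in $X$, then $(Ta_i)$ converges $D$-weakly to $Ta$ in $Y$. (If $\sigma$ is defined through subbasic sets $\{x:\h(x)>t\}$, one can argue directly with preimages of these sets instead; the net formulation is the cleaner route given Theorem~\ref{thm:thm1}.)

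The key step is the transport of metric functionals: the map $\h\mapsto \h\circ T$ should send $Y^\diamondsuit$ into $X^\diamondsuit$. Recall that metric functionals on $Y$ (with base point $y_0$) are pointwise limits of the functions $h_z(y)=D(y,z)-D(y_0,z)$, $z\in Y$, together with these functions themselves, i.e. the pointwise closure of $\{h_z\}$ in the product topology. Take the base point in $X$ to be $x_0=T^{-1}y_0$. For $z=Tw$,
\[
h_z(Tx)=D(Tx,Tw)-D(Tx_0,Tw)=d(x,w)-d(x_0,w),
\]
which is the internal functional on $X$ attached to $w$. Since $T$ is a bijection, composition with $T$ is a homeomorphism between $\R^Y$ and $\R^X$ in the product topologies, and it maps $\{h_z:z\in Y\}$ onto the corresponding family in $X$; hence it maps the pointwise closure $Y^\diamondsuit$ onto $X^\diamondsuit$. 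If the paper allows arbitrary base points, the only extra observation needed is that changing the base point shifts each functional by a constant, which does not affect the liminf inequality.

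With this in hand, let $\h\in Y^\diamondsuit$ and set $\mathbf{g}=\h\circ T\in X^\diamondsuit$. Then
\[
\liminf_i \h(Ta_i)=\liminf_i \mathbf{g}(a_i)\ge \mathbf{g}(a)=\h(Ta),
\]
so $(Ta_i)$ converges $D$-weakly to $Ta$. The main point needing care is the exact bookkeeping in the transport step: checking that the paper's definition of $X^\diamondsuit$ (its base point convention, and whether it is the closure in $\R^X$ with the product topology) is matched precisely by composition with $T$. Surjectivity is essential there, since it is what makes every $z\in Y$ of the form $Tw$ and makes composition with $T$ a homeomorphism of the product spaces.
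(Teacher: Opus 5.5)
Your proposal is correct and follows essentially the same route as the paper: fix base points $o$ and $To$, show that $\h\mapsto \h\circ T$ maps $Y^\diamondsuit$ bijectively onto $X^\diamondsuit$, and transfer the $\liminf$ inequality along nets using Theorem~\ref{thm:thm1}(3). You also justify the transport step, which the paper only asserts: composition with a bijection is a homeomorphism $\R^Y\to\R^X$ carrying $Y^\vee$ onto $X^\vee$, hence closures onto closures. Handling the inverse direction via $T^{-1}$ rather than via $(T^\diamondsuit)^{-1}$ is an immaterial difference.
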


It is not an overstatement to say that $\sigma(X,X^\diamondsuit)$ 
is a \emph{weak topology} on $X$. Indeed, our topology has a
connection to the notion of weak metrics, which
was introduced by H. Ribeiro \cite{WeakMetric} in 1943. 

\begin{definition}
Let $X$ be a set. A mapping $\delta: X\times X \to [0,+\infty)$
is called a weak metric if we have $\delta(x,x)=0$ for all
$x$ in $X$ and $\delta(x,y)\leq \delta(x,z)+\delta(z,y)$ 
for all $x,y,z$ in $X$.
\end{definition}

Examples of weak metrics are found in \cite{WeakMetED}. 
Some fixed point problems in weak metric spaces 
were investigated in \cite{FixedPWeakM}.
The following theorem reveals a connection between
$d$-weak convergence and weak metrics.

\begin{theorem}\label{thm:thm3}
Let $(X,d)$ be a metric space and $X^\diamondsuit$ the space of 
all metric functionals on $X$. Then, there exists a family of 
weak metrics $\{ \delta_{\h} \mid \h \in X^\diamondsuit \}$ on $X$ 
such that the following properties are equivalent:
\begin{enumerate}
    \item A net $(x_i)_{i\in I}$ in $X$ converges to
    $x$ in $X$ in the topology $\sigma(X,X^\diamondsuit)$.
    \item $\delta_{\h}(x,x_i)\to 0$ for all $\h$ in $X^\diamondsuit$.
\end{enumerate}
\end{theorem}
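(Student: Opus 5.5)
We define the weak metrics directly from the metric functionals and then check that convergence in them is exactly $d$-weak convergence, which by Theorem~\ref{thm:thm1}(3) is convergence in $\sigma(X,X^\diamondsuit)$.

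Recall that every metric functional $\h$ is $1$-Lipschitz, since it is a pointwise limit of functions $x\mapsto d(x,z)-d(b,z)$. For $\h\in X^\diamondsuit$ we set
\[
\delta_{\h}(x,y) := \max\{\h(x)-\h(y),\,0\}, \qquad x,y\in X .
\]
We must check that this is a weak metric.
\begin{itemize}
\item It is nonnegative by construction.
\item It satisfies $\delta_{\h}(x,x)=0$.
\item Since $\h$ is finite-valued, $\delta_{\h}$ takes values in $[0,+\infty)$; in fact $\delta_{\h}(x,y)\le d(x,y)$ by the Lipschitz property.
\item The triangle inequality follows from subadditivity of $t\mapsto t^+=\max\{t,0\}$:
\[
(\h(x)-\h(y))^+ = \bigl((\h(x)-\h(z))+(\h(z)-\h(y))\bigr)^+ \le (\h(x)-\h(z))^+ + (\h(z)-\h(y))^+ .
\]
\end{itemize}
Symmetry is neither required nor true, which is exactly why a weak metric is the right notion here.

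For the equivalence, $\delta_{\h}(x,x_i)=(\h(x)-\h(x_i))^+$, so we claim:
\[
\delta_{\h}(x,x_i)\to 0 \iff \liminf_{i}\h(x_i)\ge \h(x).
\]
\begin{itemize}
\item ($\Leftarrow$) Suppose the liminf inequality holds. Given $\varepsilon>0$, there is $i_0$ with $\inf_{j\ge i_0}\h(x_j)>\h(x)-\varepsilon$. Hence $\h(x)-\h(x_i)<\varepsilon$ for all $i\ge i_0$, and so $\delta_{\h}(x,x_i)<\varepsilon$ eventually.
\item ($\Rightarrow$) If $(\h(x)-\h(x_i))^+\to0$, then for every $\varepsilon$ we eventually have $\h(x_i)\ge \h(x)-\varepsilon$, so $\liminf_i \h(x_i)\ge \h(x)-\varepsilon$. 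Letting $\varepsilon\to0$ gives $\liminf_i\h(x_i)\ge\h(x)$.
\end{itemize}
These are standard manipulations with liminf along a directed set. Taking this over all $\h\in X^\diamondsuit$, condition (2) is exactly $d$-weak convergence of $(x_i)$ to $x$.

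Finally, Theorem~\ref{thm:thm1}(3) identifies $d$-weak convergence to $x$ with convergence to $x$ in $\sigma(X,X^\diamondsuit)$. There is one subtlety: part (3) speaks of convergence without explicitly naming the limit point. If needed, we would use the construction of $\sigma(X,X^\diamondsuit)$ to confirm that its convergent nets converge to the same point as in the $d$-weak sense. The natural construction has subbasic open sets $\{y: \h(y)>\h(x)-\varepsilon\}$, that is, $\delta_{\h}$-balls $\{y:\delta_{\h}(x,y)<\varepsilon\}$, and with these the identification is immediate. This bookkeeping about limit points is the only step we expect to need any care; the rest is elementary.
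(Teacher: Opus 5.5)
Your proposal is correct and is essentially the paper's proof. The paper uses the same weak metrics $\delta_{\h}(x,y)=\max\{0,\h(x)-\h(y)\}$ and gets the triangle inequality from subadditivity of $t\mapsto\max\{0,t\}$. It then observes that $\sigma(X,X^\diamondsuit)$-convergence to $x$ means exactly that for each $\h$ and $\epsilon>0$ one eventually has $\h(x_i)>\h(x)-\epsilon$, which is $\delta_{\h}(x,x_i)\to 0$.

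Your detour through $\liminf$ and Theorem~\ref{thm:thm1}(3) is harmless. The limit-point subtlety you flag is resolved just as you suggest: the sets $\{y\mid \h(y)>\h(x)-c\}$ are precisely the $\delta_{\h}$-balls about $x$. Moreover, the paper's proof of Theorem~\ref{thm:thm1} keeps the same limit point in both directions.
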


\section{Proofs}\label{sec:proofs}

Let us begin recalling the concept of a metric functional. 
Let $(X,d)$ be a metric space. Choose a base-point $o$ 
in $X$ and define the set 
\begin{equation}
	X^{\vee} := \{ d(\cdot,w) - d(o,w) \mid w \in X\}.
\end{equation}
Note that each element in $X^{\vee}$ is a 1-Lipschitz mapping
$X\to \R$ vanishing at the point $o$. Let $X^{\diamondsuit}$ 
denote the closure of $X^{\vee}$ in the topology of pointwise
convergence. Each element in $X^{\diamondsuit}$ is called a 
metric functional on $X$ and each element in $X^{\vee}$ is 
called internal. For more details see \cite{GNweakconv}.

We now move on to building our weak topology on $X$. 
For a point $x$ in $X$, a finite subset $F$ 
of $X^\diamondsuit$ and a positive real number $c$, define 
the set
$$
    B(x,F,c):= \bigcap_{\h\in F}
    \{ y\in X \mid\, \h(y) > \h(x) - c\}.
$$

\begin{lemma}
The collection 
$$
\mathscr{B}:= \{ B(x,F,c) \mid x\in X,\, 
F \text{ finite subset of }X^\diamondsuit,\, c > 0\}
$$
is a basis for a topology on $X$.
\end{lemma}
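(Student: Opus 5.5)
To show that $\mathscr{B}$ is a basis for a topology on $X$, I will verify the two standard basis axioms. First, $\mathscr{B}$ covers $X$. Second, if a point $z$ lies in $B(x_1,F_1,c_1)\cap B(x_2,F_2,c_2)$, then some $B(z,F,c)\in\mathscr{B}$ contains $z$ and is contained in that intersection.

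Covering is immediate. For any $x\in X$, any finite $F\subset X^\diamondsuit$ and any $c>0$, each $\h\in F$ satisfies $\h(x)>\h(x)-c$. Hence $x\in B(x,F,c)$. If one allows $F=\emptyset$, then $B(x,\emptyset,c)=X$, which also gives covering. Either way, $X$ is the union of the members of $\mathscr{B}$.

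For the intersection axiom, fix $z$ in the intersection and take $F:=F_1\cup F_2$, which is still a finite subset of $X^\diamondsuit$. For each $\h\in F_1$ define the slack
\[ \varepsilon_{\h}:=\h(z)-\h(x_1)+c_1>0, \]
and define $\varepsilon_{\h}$ for $\h\in F_2$ analogously using $x_2,c_2$. If some $\h$ lies in $F_1\cap F_2$, I take the minimum of its two slacks. Since $F$ is finite, $c:=\min_{\h\in F}\varepsilon_{\h}$ is strictly positive; this finiteness is the only point requiring care.

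Now let $y\in B(z,F,c)$ and $\h\in F_1$. Then
\[ \h(y)>\h(z)-c\ge \h(z)-\varepsilon_{\h}=\h(x_1)-c_1, \]
so $y\in B(x_1,F_1,c_1)$. The same argument with $F_2$ gives $y\in B(x_2,F_2,c_2)$. Together with $z\in B(z,F,c)$, this proves the intersection axiom.

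The argument is purely set-theoretic and uses nothing about metric functionals beyond their being real-valued functions on $X$. The topology it generates is the one the paper will call $\sigma(X,X^\diamondsuit)$.
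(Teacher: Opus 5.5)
Your proof is correct and follows the paper's argument: covering via $x\in B(x,F,c)$, and for the intersection axiom the same choice $F=F_1\cup F_2$ with $c$ the minimum of the slacks $\h(z)-\h(x_i)+c_i$. You also spell out the containment $B(z,F,c)\subset B(x_1,F_1,c_1)\cap B(x_2,F_2,c_2)$, which the paper only asserts; and if $F=\emptyset$, any $c>0$ works, so the minimum over an empty set causes no trouble.
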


\begin{proof}
First, note that every point $x$ in $X$ is contained in 
$B(x,\{\h_x\},1)$.
Now, suppose that $x$ is a point in the intersection
of two elements of $\mathscr{B}$, say $B(x_1,F_1,c_1)$ 
and $B(x_2,F_2,c_2)$. The following two inequalities hold:
\begin{enumerate}
    \item $\h(x) > \h(x_1) - c_1$, for all $\h$ in $F_1$,
    \item $\h(x) > \h(x_2) - c_2$, for all $\h$ in $F_2$.
\end{enumerate}
Define $F:=F_1 \cup F_2$ and
\begin{align*}
    c &:= \min\left\{\min_{\h\in F_1}\{\h(x)-\h(x_1)+c_1\}, 
    \min_{\h\in F_2}\{\h(x)-\h(x_2)+c_2\}\right\}. \\
\end{align*}
Then, $B(x,F,c)$ is an element in $\mathscr{B}$, contains
the point $x$, and is a subset of the intersection of 
$B(x_1,F_1,c_1)$ and $B(x_2,F_2,c_2)$.
\end{proof}

\begin{definition}
We denote by $\sigma(X,X^\diamondsuit)$ the topology generated 
by the basis $\mathscr{B}$. That is, a subset $A$ of $X$ is 
open in the topology $\sigma(X,X^\diamondsuit)$ if and only 
if for every $x$ in $A$ there exists a basis element $B$ 
in $\mathscr{B}$ such that $x\in B \subset A$.    
\end{definition}

Note that a subset $A$ of $X$ is open in the topology
$\sigma(X,X^\diamondsuit)$ if and only if for every $x$ 
in $A$ there exists a finite subset $F$ of $X^\diamondsuit$ 
and a positive real number $c$ such that $B(x,F,c)$ is a 
subset of $A$. 

One may wonder if different base-points produce different 
topologies. We show next that the choice of the base-point 
is irrelevant.

\begin{proposition}
Let $p$ be a point in $X$ such that $d(o,p)> 0$. 
Let $X_p^\diamondsuit$ denote the space of all metric functionals 
on $X$ with base-point $p$. Then, we have
$$
    \sigma(X,X^\diamondsuit)=\sigma(X,X_p^\diamondsuit).
$$
\end{proposition}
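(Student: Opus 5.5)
The plan is to show that the two families of metric functionals differ only by additive constants. Then each basis set $B(x,F,c)$ built from one base-point coincides with a basis set built from the other.

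First, fix the correspondence. For $w\in X$ we have
\[
d(\cdot,w)-d(p,w) = \bigl(d(\cdot,w)-d(o,w)\bigr) - \bigl(d(p,w)-d(o,w)\bigr).
\]
So the internal functional with base-point $p$ is $\h_w - \h_w(p)$, where $\h_w$ is the internal functional with base-point $o$. Define $\Phi:\h\mapsto \h-\h(p)$ on $X^\diamondsuit$. This map is continuous for pointwise convergence, since evaluation at $p$ is continuous in that topology. It sends $X^\vee$ onto $X_p^\vee$, and its inverse $\mathbf{g}\mapsto \mathbf{g}-\mathbf{g}(o)$ is equally continuous and sends $X_p^\vee$ onto $X^\vee$. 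Because $\Phi$ is a homeomorphism of $\R^X$ in the product topology mapping $X^\vee$ onto $X_p^\vee$, it maps the closure onto the closure. Hence
\[
X_p^\diamondsuit=\{\h-\h(p)\mid \h\in X^\diamondsuit\}.
\]
The only real care needed is here: the closures are taken in $\R^X$, and $\Phi$ extends to a homeomorphism of $\R^X$ onto itself via $f\mapsto f-f(p)$.

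Second, compare the basis sets. For $\h\in X^\diamondsuit$ and $\mathbf{g}=\h-\h(p)$, the inequality $\mathbf{g}(y)>\mathbf{g}(x)-c$ is equivalent to $\h(y)>\h(x)-c$, because the constant cancels. So for a finite $F\subset X^\diamondsuit$, the set $B(x,F,c)$ equals the set $B(x,\Phi(F),c)$ formed with base-point $p$. Conversely, every finite subset of $X_p^\diamondsuit$ has the form $\Phi(F)$ for some finite $F\subset X^\diamondsuit$. The two bases $\mathscr{B}$ are therefore literally the same collection of sets, so they generate the same topology.

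The main obstacle, mild as it is, is the closure step in the first paragraph. The rest is a direct cancellation. The hypothesis $d(o,p)>0$ only ensures that the two base-points are distinct and plays no further role.
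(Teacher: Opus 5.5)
Your argument is correct and follows the paper's proof: the same map $\h\mapsto\h-\h(p)$, the same identity on internal functionals, and passage to pointwise limits with the roles of $o$ and $p$ swapped. You also spell out the cancellation of the constant in $B(x,F,c)$, which the paper leaves implicit.

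One claim needs correcting. The map $f\mapsto f-f(p)$ is not a homeomorphism of $\R^X$ onto itself: it sends every constant function to $0$, and its image contains only functions vanishing at $p$. What is true is that it restricts to a homeomorphism from the closed subspace $\{f\in\R^X \mid f(o)=0\}$ onto $\{g\in\R^X \mid g(p)=0\}$, with inverse $g\mapsto g-g(o)$. Since $X^\vee$, $X_p^\vee$ and their closures lie in these closed subspaces, your closure step goes through unchanged. Equivalently, continuity of both maps on $\R^X$ gives the two inclusions, which is exactly the paper's ``permute $o$ and $p$'' step.
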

\begin{proof}
For every $x,y$ in $X$ we have 
$$ d(x,y)-d(o,y)-(d(p,y)-d(o,y)) = d(x,y)-d(p,y).$$
Then, $\h - \h(p)$ is in $X_p^\diamondsuit$ for all $\h$ in 
$X^\diamondsuit$. Now, if the points $o$ and $p$ are permuted 
in the previous equality, we conclude that the mapping 
$\h\mapsto \h - \h(p)$ is a bijection from $X^\diamondsuit$ 
onto $X_p^\diamondsuit$.
\end{proof}

\begin{lemma}\label{lem:coarse}
The topology $\sigma(X,X^\diamondsuit)$ is coarser than 
the metric topology on $(X,d)$. 
\end{lemma}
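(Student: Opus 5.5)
It suffices to show that every basis element $B(x,F,c)$ of $\mathscr{B}$ is open in the metric topology. Any $\sigma(X,X^\diamondsuit)$-open set is a union of such basis elements, so it will then be metrically open, which is exactly the claim that $\sigma(X,X^\diamondsuit)$ is coarser. Since $B(x,F,c)$ is a finite intersection over $\h\in F$ of sets of the form
$$
U_{\h}:=\{ y\in X \mid \h(y) > \h(x)-c\},
$$
and finite intersections of open sets are open, it is enough to show that each single set $U_{\h}$ is metrically open.

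The key ingredient is that every metric functional is $1$-Lipschitz. Elements of $X^{\vee}$ are $1$-Lipschitz, as noted in the paper. This property passes to pointwise limits: if a net $\h_\alpha\to\h$ pointwise, then for all $y,z$ the inequality $\abs{\h_\alpha(y)-\h_\alpha(z)}\le d(y,z)$ holds for every $\alpha$, and letting $\alpha$ run through the net gives $\abs{\h(y)-\h(z)}\le d(y,z)$. Hence every $\h$ in $X^\diamondsuit$ is $1$-Lipschitz, and in particular continuous for $d$.

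With this in hand, $U_{\h}$ is the preimage of the open ray $(\h(x)-c,+\infty)$ under the continuous map $\h$, so it is metrically open. For an explicit ball, take $y\in U_{\h}$ and set $r:=\h(y)-\h(x)+c>0$. For any $z$ with $d(z,y)<r$, the Lipschitz bound gives $\h(z)\ge \h(y)-d(z,y)>\h(x)-c$, so the open ball of radius $r$ about $y$ lies in $U_{\h}$.

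No step here is a genuine obstacle. The only point needing care is that the Lipschitz property survives passage to the pointwise closure, and this is immediate because non-strict inequalities are preserved under pointwise limits.
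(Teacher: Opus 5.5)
Your proof is correct and takes essentially the paper's route: both rest on the $1$-Lipschitz property of metric functionals. The paper uses a basis element $B(x,F,c)$ centered at the point and notes that the metric ball $\{y\in X \mid d(x,y)<c\}$ lies inside it, whereas you show each basis set is metrically open as a finite intersection of preimages of open rays; your check that the Lipschitz bound survives pointwise limits supplies a detail the paper takes for granted.
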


\begin{proof}
Let $A$ be an open set in the topology $\sigma(X,X^\diamondsuit)$
and $x$ a point in $A$. Then, there exists a basis element 
$B(x,F,c)$ which is a subset of $A$. Since every metric 
functional is $1$-Lipschitz, the set 
$U(x,c):=\{y\in X \mid d(x,y) < c\}$ is a subset of 
$B(x,F,c)$. Since $U(x,c)$ is a ball, which is
open in the metric topology, it follows that the set $A$ 
is open in the metric topology.
\end{proof}

Metric functionals are always continuous as mappings 
$X\to\R$ with their corresponding metric topologies. 
If $X$ is equipped with the topology $\sigma(X,X^\diamondsuit)$,
we have the following.

\begin{lemma}\label{lem:lowersc}
The topology $\sigma(X,X^\diamondsuit)$ makes 
every metric functional lower semi-continuous on $X$.  
\end{lemma}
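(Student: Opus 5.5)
The plan is to verify lower semi-continuity directly from the definition via superlevel sets. Fix a metric functional $\h$ in $X^\diamondsuit$. Recall that a function $f:X\to\R$ is lower semi-continuous with respect to a topology on $X$ if and only if for every real number $t$ the set $\{y\in X \mid f(y)>t\}$ is open. Equivalently, for every $x$ in $X$ and every $\varepsilon>0$ there should be a neighbourhood $V$ of $x$ with $f(y)>f(x)-\varepsilon$ for all $y\in V$. The sets $B(x,F,c)$ are tailored exactly to this second formulation, so the whole argument consists of producing the right basis element.

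First, fix $t\in\R$ and set $A_t:=\{y\in X \mid \h(y)>t\}$. Take any $x\in A_t$ and put $c:=\h(x)-t$, which is strictly positive. With the singleton $F:=\{\h\}$, the basis element is
$$
B(x,\{\h\},c)=\{y\in X \mid \h(y)>\h(x)-c\}=\{y\in X\mid \h(y)>t\}=A_t .
$$
So $B(x,\{\h\},c)$ belongs to $\mathscr{B}$, contains $x$, and is contained in $A_t$ (in fact it equals $A_t$). By the definition of $\sigma(X,X^\diamondsuit)$, this shows that $A_t$ is open.

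Since $t$ was arbitrary, $\h$ is lower semi-continuous. As an alternative, one can check the net characterization $\liminf_{i}\h(x_i)\geq \h(x)$ whenever $x_i\to x$ in $\sigma(X,X^\diamondsuit)$. For each $\varepsilon>0$ the net is eventually in $B(x,\{\h\},\varepsilon)$, which gives the $\liminf$ inequality.

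There is no real obstacle. The only point requiring care is to observe that singleton sets $F$ are allowed in $\mathscr{B}$. It is also worth remarking that the basis set attached to $\h$ is itself a superlevel set of $\h$, and the argument rests on exactly this fact.
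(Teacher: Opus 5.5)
Your proof is correct and is essentially the paper's argument: for $\h(x)>r$, the paper shows that the singleton basis element $B(x,\{\h\},\h(x)-r)$ is contained in the superlevel set $\{y\in X \mid \h(y)>r\}$. Your observation that this basis element actually equals the superlevel set, and your alternative net-based check, are correct additions.
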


\begin{proof}
Let $\h$ be a metric functional on $X$ and $r$ a real number. 
Suppose that $x$ is a point in $X$ with $\h(x) > r$. Then,
for every $y$ in the basis element $B(x,\{\h\}, \h(x)-r)$
we have $\h(y) > r$.
\end{proof}

\begin{proof}[\bf Proof of Theorem \ref{thm:thm1}]
The first property is shown in Lemma \ref{lem:coarse} and 
the second property in Lemma \ref{lem:lowersc}.

Suppose that a net $(a_i)_{i\in I}$, with $(I,\succcurlyeq)$
a directed index set, converges $d$-weakly to a point $a$ 
in $X$. Let $A$ be an open set in the topology 
$\sigma(X,X^\diamondsuit)$ and assume that the point $a$ 
is in $A$. Then, there exists a positive real number $c$ 
and a finite subset $F$ of $X^\diamondsuit$ such that the 
basis element $B(a,F,c)$ is a subset of $A$.
By $d$-weak convergence, for each metric functional $\h$ 
in $F$ there exists an index $i_0(c,\h)\in I$  such that 
$$
    \h(a_i) > \h(a) - c,
$$
for all $i \succcurlyeq i_0(c,\h)$.
Now, since $F$ is a finite set, there exists an index $i_*\in I$ 
such that $i_* \succcurlyeq i_0(c,\h)$ for all $\h$ in $F$. 
Then, for every $i \succcurlyeq i_*$ we have 
$$
  a_i \in B(a,F,c) \subset A.  
$$
This shows that $(a_i)_{i\in I}$ converges to $a$ in the 
topology $\sigma(X,X^\diamondsuit)$.

Now, suppose that a net $(a_i)_{i\in I}$ converges to a point
$a$ in $X$ in the topology $\sigma(X,X^\diamondsuit)$. 
Let $\h$ be in $X^\diamondsuit$ and $c$ a positive real number. 
Since $B(a,\{\h\},c)$ is open in the topology 
$\sigma(X,X^\diamondsuit)$, there exists an index $i_0$ 
such that 
$$
    a_i \in B(a,\{\h\},c),
$$
for all $i \succcurlyeq i_0$. In other words, we have
$$
    \liminf_{i\in I} \h(a_i) \geq \h(a).
$$
\end{proof}

\begin{proof}[\bf Proof of Theorem \ref{thm:conjfalse}]
Choose a sequence of closed intervals
$A_n\subset[0,1]$ such that
$A_n\cap A_m = \emptyset$ for all $n\neq m$ and $|A_n|\to 0$. 
For each $n\geq 1$, 
let $f_n$ be a non-negative continuous function on $[0,1]$
that has non-zero values on the interval $A_n$
and a maximum value equal to $n$. We have $\norm{f_n}_\infty = n$
for all $n\geq 1$. We show next that for every
$\h$ in $C[0,1]^\diamondsuit$ we have
\[
 \liminf_{n}\h(f_n) \geq 0.
\]

First, we claim that for each internal $\h_w \in C[0,1]^\vee$,
where $\h_w(\cdot)=\norm{\cdot - w}_\infty-\norm{w}_\infty$, the
value $\h_w(f_n)$ is negative for at most one $n$. Indeed, for a
fixed $w\in C[0,1]$ there exists $s\in [0,1]$ such that
$\abs{w(s)}=\norm{w}_\infty$. If $s$ is not in $A_n$, then
$f_n(s)=0$ and 
\[
    \h_w(f_n)=\norm{f_n - w}_\infty - \norm{w}_\infty \geq \abs{w(s)}-\abs{w(s)}=0.
\]
Since $A_n$ and $A_m$ are disjoint for all $n\neq m$, the
value $\h_w(f_n)$ is negative for at most one $n$.

Now, let $\h$ be an arbitrary metric functional on $C[0,1]$.
Then, there exists a net $(w_i)_{i\in I}$ in $C[0,1]$ such that
$\h_{w_i}(f) \to \h(f)$ for all $f$ in $C[0,1]$. Suppose that 
there exist two distinct positive integers $n$ and $m$ such that
$\h(f_n) < 0$ and $\h(f_m) < 0$. Choose $\epsilon >0$ such that
$\h(f_n)< -2\epsilon$ and $\h(f_m)< -2\epsilon$. Since
$\h_{w_i} \to \h$, there exist $i(\epsilon,n)$ and $j(\epsilon,m)$
in the directed set $(I,\succcurlyeq)$ such that 
$\h_{w_i}(f_n) < \h(f_n) + \epsilon$
for all $i\succcurlyeq i(\epsilon,n)$ and 
$\h_{w_i}(f_m) < \h(f_m) + \epsilon$ for all $i\succcurlyeq j(\epsilon,m)$.
Let $i_0$ be the index in $I$ such that $i_0 \succcurlyeq i(\epsilon,n)$ and 
$i_0 \succcurlyeq j(\epsilon,m)$. Then, for every $i\succcurlyeq i_0$ 
we have
\[
 \max\{\h_{w_i}(f_n),\h_{w_i}(f_m)\} < -\epsilon
\]
But according to the previous argument, each internal $\h_{w_i}$ has 
negative value for at most one $f_n$. Therefore, $\h(f_n) < 0$
for at most one $n$. This implies 
\[
 \liminf_{n}\h(f_n) \geq 0 =\h(0).
\]
\end{proof}

\begin{remark}
We listed in \cite{GNweakconv} some normed linear spaces where
$d$-weak convergence of sequences implies boundedness. 
However, the argument we gave for the case $C[0,1]$ contains an 
error, which is revealed by the sequence $f_n$ 
presented above.
In $\ell_1$ and in a normed linear space $X$ with strictly convex 
dual, $d$-weakly convergent sequences are bounded.
\end{remark}

\begin{proof}[\bf Proof of Theorem \ref{thm:thm2}]
Choose $o$ in $X$ to build the space $X^\diamondsuit$
and choose $To$ in $Y$ to build the space $Y^\diamondsuit$. 
Since $T$ is surjective and distance-preserving, for every
point $y$ in $Y$ there exists a point $w$ in $X$ such that
for every $x$ in $X$ we have
\[
    D(Tx,y)-D(To,y) = d(x,w)-d(o,w).
\]
Then, the mapping 
\[ T^\diamondsuit : Y^\diamondsuit \to X^\diamondsuit 
\quad \h \mapsto \h\circ T\]
is a bijection. 

Now, if a net $(x_i)_{i\in I}$ in $X$ converges $d$-weakly
to $x$ in $X$, then for every $\h$ in $Y^\diamondsuit$ we 
have
\[
    \liminf_{i\in I}\,\h(Tx_i) 
    = \liminf_{i\in I}\,T^\diamondsuit\h(x_i)
    \geq T^\diamondsuit\h(x)
    = \h(Tx).
\]
This shows that $(Tx_i)$ converges $D$-weakly to $Tx$ in $Y$.

On the other hand, if $(Tx_i)_{i\in I}$ in $Y$ converges 
$D$-weakly to $Tx$ in $Y$, then for every $\h$ in $X^\diamondsuit$
we have
\[
    \liminf_{i\in I}\,\h(x_i) 
    = \liminf_{i\in I}\,(T^\diamondsuit)^{-1}\h(Tx_i)
    \geq (T^\diamondsuit)^{-1}\h(Tx)
    = \h(x).
\]
This shows that $(x_i)$ converges $d$-weakly to $x$ in $X$.
\end{proof}

\subsection{Weak metrics}
For each $\h$ in $X^\diamondsuit$ define the mapping 
$\delta_{\h} : X\times X \to [0,+\infty)$ by
$$
    (x,y) \mapsto \delta_{\h}(x,y):= \max\{0, \h(x) - \h(y)\}.
$$

\begin{lemma}
For every $\h$ in $X^\diamondsuit$, $\delta_{\h}$ is a weak 
metric on $X$.    
\end{lemma}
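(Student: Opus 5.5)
We verify the two defining properties of a weak metric directly from the definition $\delta_{\h}(x,y)=\max\{0,\h(x)-\h(y)\}$. The only facts needed are that $\h$ is a real-valued function on $X$ and that $t\mapsto\max\{0,t\}$ is subadditive on $\R$. The Lipschitz property of metric functionals and the structure of $X^\diamondsuit$ are not needed. Well-definedness is clear: since $\h$ takes finite real values, $\delta_{\h}(x,y)$ is a number in $[0,+\infty)$.

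First, the vanishing on the diagonal: for every $x$ in $X$ we have $\delta_{\h}(x,x)=\max\{0,\h(x)-\h(x)\}=\max\{0,0\}=0$.

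Second, the triangle inequality $\delta_{\h}(x,y)\leq \delta_{\h}(x,z)+\delta_{\h}(z,y)$ for all $x,y,z$ in $X$. Write $t_+:=\max\{0,t\}$ and use the identity
\[
\h(x)-\h(y)=\bigl(\h(x)-\h(z)\bigr)+\bigl(\h(z)-\h(y)\bigr).
\]
We then invoke the elementary inequality $(a+b)_+\leq a_+ + b_+$ for real $a,b$. It holds because $a+b\leq a_+ + b_+$ and $0\leq a_+ + b_+$, so the maximum of $0$ and $a+b$ is also bounded by $a_++b_+$. Taking $a=\h(x)-\h(z)$ and $b=\h(z)-\h(y)$ gives exactly the required inequality.

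There is no real obstacle here. The only point to keep straight is the order of the arguments in the triangle inequality, since $\delta_{\h}$ is not symmetric. The decomposition above matches the order $\delta(x,y)\leq\delta(x,z)+\delta(z,y)$ in the paper's definition of a weak metric.
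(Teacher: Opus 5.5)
Your proof is correct and follows the paper's own argument. The paper simply recalls the inequality $0\leq\max\{0,a+b\}\leq\max\{0,a\}+\max\{0,b\}$; you apply it with $a=\h(x)-\h(z)$ and $b=\h(z)-\h(y)$, and you also spell out the trivial check $\delta_{\h}(x,x)=0$, which the paper leaves implicit.
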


\begin{proof}
Recall that for every real numbers
$a$ and $b$ we have
\[
 0 \leq \max\{0,a+b\} \leq \max\{0,a\} + \max\{0,b\}.
\]
\end{proof}

\begin{proof}[\bf Proof of Theorem \ref{thm:thm3}]
A net $(x_i)_{i\in I}$ in $X$ converges to
$x$ in $X$ in the topology $\sigma(X,X^\diamondsuit)$
if and only if for every $\h$ in $X^\diamondsuit$ and
a positive real number $\epsilon$ there exists $i_0$ 
in $I$ such that
\[
    \h(x_i) > \h(x) - \epsilon,
\]
for all $i\succcurlyeq i_0$. This is equivalent to
the property $\delta_{\h}(x,x_i)\to 0$ for all 
$\h$ in $X^\diamondsuit$.
\end{proof}

In general, the topology $\sigma(X,X^\diamondsuit)$ is not 
Hausdorff. For example, if $X$ is the real line equipped
with the metric $d(x,y):=\sqrt{\abs{x-y}})$, we have
$X^\diamondsuit = X^\vee \cup\{\mathbf{0}\}$ and the 
sequence $(a_n)$ with $a_n:=n$ in $X$ converges
in the topology $\sigma(X,X^\diamondsuit)$ to every 
point in $X$.
For normed linear spaces, the topology $\sigma(X,X^\diamondsuit)$
behaves better.

\begin{proposition}[{\cite[Theorem~1.2]{GNweakconv}}]
Let $X$ be a normed linear space. Then, the 
topology $\sigma(X,X^\diamondsuit)$ is Hausdorff.  
\end{proposition}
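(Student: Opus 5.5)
To prove that $\sigma(X,X^\diamondsuit)$ is Hausdorff on a normed linear space $X$, we take two distinct points $x\neq y$ and produce finitely many metric functionals together with a constant $c>0$ such that the basis elements $B(x,F,c)$ and $B(y,G,c)$ are disjoint. The main tool is that every continuous linear functional of norm at most one gives rise to a metric functional. Concretely, fix $\varphi\in X^*$ with $\norm{\varphi}=1$. Taking the base point $o=0$ and $w_t := -t v$ with $\varphi(v)=1$, $\norm{v}$ close to $1$, the internal functionals are
\[
\h_{w_t}(z)=\norm{z+tv}-\norm{tv}.
\]
A Hahn--Banach/convexity argument in the style of Karlsson's description of metric functionals shows that, as $t\to\infty$ along a subnet, $\h_{w_t}$ converges pointwise to a metric functional $\h$ with $\h\geq -\varphi$-type control. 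More cleanly, I would use the known fact (from \cite{GNweakconv}) that for every $\varphi$ in the dual unit sphere, the function $z\mapsto -\varphi(z)$ is dominated by some metric functional $\h_\varphi$ satisfying
\[
\h_\varphi(z)\le -\varphi(z)+\text{(something vanishing on differences we need)}.
\]
I would rather aim directly for equality $\h(z)=-\varphi(z)$ on the line $\R(x-y)$, by choosing $v$ as a norming direction.

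The simpler route, which I will actually commit to, is as follows. Translate so that $y=0$, and put $u=x$ with $\norm{u}>0$. Consider the internal functionals $\h_{-tu}(z)=\norm{z+tu}-t\norm{u}$ and $\h_{tu}(z)=\norm{z-tu}-t\norm{u}$. By compactness of $X^\diamondsuit$ (Tychonoff, since each value $\h(z)$ lies in $[-\norm z,\norm z]$), take limit points $\h_+$ and $\h_-$ as $t\to\infty$. Using convexity of $t\mapsto\norm{z+tu}-t\norm u$, one computes
\[
\h_+(su)=s\norm{u}, \qquad \h_-(su)=-s\norm{u}\quad\text{for all real } s.
\]
Now set $c=\norm{u}/2$ and $F=\{\h_+,\h_-\}$. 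Any $z$ in $B(u,F,c)\cap B(0,F,c)$ would satisfy
\[
\h_+(z)>\tfrac12\norm u \quad\text{and}\quad \h_-(z)>-\tfrac12\norm u.
\]
This alone is not contradictory, so we need two-sided control. Here the key inequality is $\h_+(z)+\h_-(z)\le 0$ for every $z$: indeed,
\[
\norm{z+tu}+\norm{z-tu}\ge 2t\norm u
\]
goes the wrong way. The obstacle is therefore to obtain an upper bound, and the plan must instead use lower semicontinuity in the appropriate direction: to separate, we need $\h(z)\le$ something near one point.

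The main obstacle, then, is that basis sets only give lower bounds $\h(z)>\h(x)-c$. Separation therefore requires $\h_1,\h_2$ with $\h_1(z)+\h_2(z)\le K$ for all $z$ and $\h_1(x)+\h_2(y)>K+2c$. I would try $\h_1=\h_+$ together with the internal functional $\h_2(z)=\norm{z}-0$ (the case $w=0$), or, alternatively, $\h_1(z)=-\varphi(z)$ realized as a metric functional. I would first establish that $z\mapsto -\varphi(z)$ lies in $X^\diamondsuit$ when $\norm\varphi=1$ and $X$ is finite-dimensional, and handle the general case through the smallest limit of the $\h_{-tu}$. Then $\h_1=-\varphi$ and $\h_2=\varphi$ are both available, with $\varphi(u)=\norm u$, and they satisfy $\h_1+\h_2=0$ while $\h_1(0)+\h_2(u)=\norm u>0$. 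Taking $c=\norm u/3$, the sets $B(0,\{\h_1,\h_2\},c)$ and $B(u,\{\h_1,\h_2\},c)$ are disjoint. I expect the step of showing that such linear metric functionals exist, at least when restricted appropriately, to be the heart of the proof, and I would rely on the corresponding construction in \cite[Theorem~1.2]{GNweakconv}.
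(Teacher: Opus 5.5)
\textbf{There is a genuine gap.} Your final separation mechanism is correct. If $\varphi\in X^*$ has both $\varphi$ and $-\varphi$ in $X^\diamondsuit$ and $\varphi(x)>\varphi(y)$, then $B(x,\{\varphi\},c)$ and $B(y,\{-\varphi\},c)$ are disjoint once $2c\le\varphi(x)-\varphi(y)$. The paper's own proof is only the remark that the cited uniqueness argument for $d$-weak limits carries over to nets. So everything rests on producing such a pair of linear metric functionals. You never establish this step, and the justification you sketch for it fails in three ways.

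\emph{The lemma as you state it is false, even in finite dimensions.} In $(\R^2,\norm{\cdot}_1)$ with base point $0$, restricting to the coordinate axes shows that every metric functional has the form $G_1(z_1)+G_2(z_2)$. Each $G_k$ lies in $\{t\mapsto\abs{t-p}-\abs{p} : p\in\R\}\cup\{t\mapsto t,\ t\mapsto -t\}$. Hence $z\mapsto-(z_1+z_2/2)$ is not a metric functional, although $\varphi=(1,1/2)$ has dual norm one.

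\emph{The limit of $\h_{-tu}$ does not give a linear functional.} The map $t\mapsto\norm{z+tu}-t\norm{u}$ is nonincreasing, with limit $\max_{\psi\in J(u)}\psi(z)$, where $J(u)=\{\psi\in X^*:\norm{\psi}=1,\ \psi(u)=\norm{u}\}$. This is linear only when $J(u)$ is a singleton. Some normed spaces have no such smooth points at all, e.g.\ $\ell_1(\Gamma)$ with $\Gamma$ uncountable, so you cannot perturb $u$ into one.

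\emph{Matching $-\varphi$ only on $\R(x-y)$ or on a subspace is useless.} Disjointness of the basis sets needs $\h_1+\h_2\le K$ on all of $X$. For your two Busemann limits, $\h_+(z)+\h_-(z)=\max_{\psi\in J(u)}\psi(z)-\min_{\psi\in J(u)}\psi(z)\ge 0$, as you noticed.

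The missing idea is to restrict to \emph{extreme points} of the dual unit ball $K$.

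\emph{Existence of a suitable $\varphi$.} $J(u)$ is a nonempty weak$^*$-compact face of $K$. By Krein--Milman it has an extreme point $\varphi$, which is then extreme in $K$ and satisfies $\varphi(u)=\norm{u}$.

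\emph{Showing $-\varphi\in X^\diamondsuit$.} Use Choquet's lemma: the weak$^*$-open slices $\{\psi\in K:\psi(v)>\alpha\}$, $v\in X$, that contain $\varphi$ form a neighbourhood base of $\varphi$ in $K$. Each such slice contains $J(v)$, because $\psi(v)=\norm{v}\ge\varphi(v)>\alpha$ for $\psi\in J(v)$. The Busemann limit
\[
b_v(z)=\lim_{t\to\infty}\bigl(\norm{z-tv}-t\norm{v}\bigr)=-\min_{\psi\in J(v)}\psi(z)
\]
lies in $X^\diamondsuit$, being a pointwise limit of internal functionals. It follows that on any prescribed finite set of points, $b_v$ can be made as close to $-\varphi$ as desired. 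Since $X^\diamondsuit$ is pointwise closed, $-\varphi\in X^\diamondsuit$.

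\emph{Showing $\varphi\in X^\diamondsuit$.} Apply the same argument to the extreme point $-\varphi$.

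With this lemma in place, your choice $\h_1=-\varphi$, $\h_2=\varphi$, $c=\norm{u}/3$ completes the proof.
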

\begin{proof}
The proof of \cite[Theorem~1.2]{GNweakconv} is valid for nets.    
\end{proof}
  
\bibliographystyle{plain}
\bibliography{ref}

@article{GNweakconv,
    author = {Gutiérrez, Armando W. and Nevanlinna, Olavi},
    title = {Metric functionals and weak convergence},
    journal = {Z. Anal. Anwend.},
    year = {2026},
    doi = {10.4171/ZAA/1828}
}

@article{WeakMetED,
 author = {Papadopoulos, Athanase and Troyanov, Marc},
 title = {Weak metrics on {Euclidean} domains},
 fjournal = {JP Journal of Geometry and Topology},
 journal = {JP J. Geom. Topol.},
 issn = {0972-415X},
 volume = {7},
 number = {1},
 pages = {23--43},
 year = {2007},
 language = {English},
 zbMATH = {5233854},
 Zbl = {1189.30083}
}

@article{WeakMetric,
 author = {Ribeiro, Hugo},
 title = {Sur les espaces {\`a} m{\'e}trique faible},
 fjournal = {Portugaliae Mathematica},
 journal = {Port. Math.},
 issn = {0032-5155},
 volume = {4},
 pages = {21--40},
 year = {1943},
 language = {French},
 url = {https://eudml.org/doc/114615},
 zbMATH = {3044387},
 Zbl = {0028.19103}
}

@article{FixedPWeakM,
 author = {Guti{\'e}rrez, Armando W. and Walsh, Cormac},
 title = {Firm non-expansive mappings in weak metric spaces},
 fjournal = {Archiv der Mathematik},
 journal = {Arch. Math.},
 issn = {0003-889X},
 volume = {119},
 number = {4},
 pages = {389--400},
 year = {2022},
 language = {English},
 doi = {10.1007/s00013-022-01759-5},
 zbMATH = {7589462}
}
\end{document}